\newtheorem{theorem}{Theorem}[section]
\newtheorem{lemma}[theorem]{Lemma}
\newtheorem{proposition}[theorem]{Proposition}
\newtheorem{remark}[theorem]{Remark}
\numberwithin{equation}{section}
\begin{document}
 \title{On Riemannian foliations admitting  transversal conformal fields} 
 \author[W.~C.~Kim]{Woo Cheol  Kim}
 \address{Department of Mahtematics\\ 
 Jeju National University\\
 Jeju 690-756\\
 Republic of Korea}
 \email[W.~C.~Kim]{curi1981@naver.com}
 \author[S.~D.~Jung]{Seoung Dal Jung}
 \address{Department of Mathematics\\
 Department of Mathematics\\
 Jeju National University\\
 Jeju 690-756\\
 Republic of Korea}
 \email[S.~D.~Jung]{sdjung@jejunu.ac.kr}
\subjclass[2010]{53C12; 57R30}
\keywords{Riemannian foliation, Transversal conformal field, generalized Obata
theorem}

\begin{abstract}  
Let $(M,g_M,\mathcal F)$ be a closed, connected Riemannian manifold with a  Riemannian foliation $\mathcal F$  of nonzero constant transversal scalar curvature. When $M$ admits a  transversal nonisometric  conformal field, we find some generalized conditions that $\mathcal F$ is transversally isometric to the sphere.  
\end{abstract}
\maketitle

\renewcommand{\thefootnote}{} \footnote{%
This paper was supported by the 2018 scientific promotion program funded by Jeju National University. 
.} \renewcommand{\thefootnote}{\arabic{footnote}} %
\setcounter{footnote}{0}

\section{Introduction}
A Riemannian foliation is a foliation $\mathcal F$ on a smooth manifold $M$ such that the normal bundle $Q=TM/T\mathcal F$ may be endowed with a metric $g_Q$  whose Lie derivative is zero along leaf directions \cite{TO}.  Note that we can choose a Riemannian metric $g_M$ on $M$ such that $g_M|_{T\mathcal F^\perp} =g_Q$; such a metric is called {\it bundle-like}.   A Riemannian foliation  $\mathcal F$ is {\it transversally isometric} to $(W,G)$, where $G$ is a discrete group acting by isometries on a Riemannian manifold $(W,g_W)$, if there exists a homeomorphism $\eta:W/G \to M/\mathcal F$ that is locally covered by isometries \cite{LR}.
 Recently,  S. D. Jung and K. Richardson \cite{JLK} proved the {\it generalized Obata theorem} which states that:
 $\mathcal F$ is  transversally isometric to a sphere $(S^q(1/c),G)$, where $G$ is the discrete subgroup of $O(q)$ acting by isometries on the last $q$ coordinates of the sphere $S^q(1/c)$ of radius $1/c$  if and only if there exists a non-constant basic function $f$ such that 
 \begin{align}
 \nabla_X \nabla f = -c^2 f X
 \end{align}
for all  foliated normal vectors $X$, where $c$ is a positive real number and  $\nabla$ is the transverse Levi-Civita connection on the normal bundle $Q$.

A {\it transversal conformal field} is a normal  vector field with a flow preserving the conformal class of the transverse metric.  That is,  the infinitesimal automorphism $Y$ is  transversal conformal  if  $L_Yg_Q=2f_Y g_Q$ for
a basic function $f_Y$ depending on $Y$ (\cite{JJ}, \cite{PY}, \cite{PY1}), where $L_Y$ is the Lie derivative.   In this case,  it is trivial that 
\begin{align}
f_Y = {1\over q}{\rm div}_\nabla(\pi (Y)),
\end{align}
 where ${\rm div}_\nabla$ is a transversal divergence and $\pi:TM\to Q$ is the natural projection.  If the transversal conformal field $Y$ satisfies ${\rm div}_\nabla(\pi (Y))=0$, i.e, $L_Y g_Q=0$,   then $Y$ is said to be  {\it transversal Killing field}, that is, its flow is a transversal infinitesimal isometry.

In this article, we study the Riemannian foliation admitting  a transversal nonisometric conformal field.  
First, we recall the well-known theorems about the Riemannian foliations  admitting a transversal nonisometric conformal field (\cite{JU4}, \cite{JJ}, \cite{JL},  \cite{JLK}). 

Let $R^Q$, ${\rm Ric}^Q$ and $\sigma^Q$ be the transversal curvature tensor, transversal Ricci operator and transversal scalar curvature  with respect to the transversal Levi-Civita connection $\nabla$ on $Q$  \cite{TO}.
Let $\kappa_B$ be the basic part of the mean curvature form $\kappa$ of the foliation $\mathcal F$ and $\kappa_B^\sharp$ its dual vector field (precisely, see Section 2).  Then we have the following well-known theorems.

\bigskip
\noindent
{\bf Theorem A. } \cite {JLK}  {\it Let $(M,g_M,\mathcal F)$ be a closed, connected Riemannian manifold with a Riemannian foliation $\mathcal F$  of a nonzero constant  transversal scalar curvature $\sigma^Q$.  If $M$ admits a transversal nonisometric conformal field $ Y$   satisfying  one of the following conditions:

 $(1)$ $Y=\nabla h$ for any basic function $h$, or
 
 $(2)$ $L_Y{\rm Ric}^Q =\mu g_Q$ for some basic function $\mu$, or 
 
  $(3)$ ${\rm Ric}^Q(\nabla f_Y)={\sigma^Q\over q}\nabla f_Y$, $g_Q(\kappa_B^\sharp,\nabla f_Y)=0$ and $g_Q(A_{\kappa_B^\sharp}\nabla f_Y,\nabla f_Y)\leq 0$, 
    
\noindent   then $\mathcal F$ is transversally isometric to the sphere $(S^q(1/c),G)$. }
  
\bigskip
\noindent{\bf Theorem B.}  \cite{JL}  {\it Let $(M,g_M,\mathcal F)$ be as in Theorem A. If $M$ admits a transversal nonisometric conformal field $Y$,  then
\begin{align*}
\int_M g_Q( {\rm Ric}^Q(\nabla f_Y), \nabla f_Y) \leq { (\sigma^Q)^2\over q(q-1)}\int_M f_Y^2.
\end{align*}
Equality holds if and only if $\mathcal F$ is transversally isometric to the sphere $(S^q(1/c),G)$.}

\begin{remark} Theorem B has been proved in  \cite{YA2} for the point foliation.
\end{remark}

\bigskip
\noindent{\bf Theorem C.}  \cite{JL} {\it Let $(M,g_M,\mathcal F)$ be a complete Riemannian manifold with a Riemannian foliation $\mathcal F$ of a positive constant transversal scalar curvature $\sigma^Q$.  If $M$ admits a transversal nonisometric conformal field $Y$, then
\begin{align*}
|\nabla \nabla f_Y |^2 \geq {(\sigma^Q)^2 \over q (q-1)^2 } f_Y^2.
\end{align*}
Equality holds if and only if  $\mathcal F$ is transversally isometric to the sphere $(S^q(1/c),G)$.}

\begin{remark}
Theorem C has been proved in \cite{GO} for the point foliation.
\end{remark}
 Now, we recall two tensor fields $E^Q$ and $Z^Q$ (\cite{JU4}, \cite{JL}) by
\begin{align}
&E^Q(Y) = {\rm Ric}^Q(Y) -{\sigma^Q \over q} Y,\quad Y\in T\mathcal F^\perp,\\
& Z^Q (X,Y) = R^Q (X,Y) - R_\sigma^Q (X,Y),
\end{align}
where $R^Q_\sigma(X,Y)s = {\sigma^Q\over q(q-1)}\{g_Q(\pi(Y),s)\pi(X) -g_Q(\pi(X),s)\pi(Y)\}$ for any vector field $X,Y \in TM$ and $s\in \Gamma Q$.  Trivially,  if $E^Q =0$  (resp. $Z^Q=0$), then  the foliation is transversally Einsteinian (resp. transversally constant sectional curvature).   The tensor $Z^Q$ is called as the transversal concircular curvature tensor, which is a generalization of the concircular curvature tensor on a Riemannian manifold.  In an ordinary manifold, the concircular curvature tensor is invariant under a concircular transformation which is a conformal transformation preserving geodesic circles \cite{YA-1}.
Then we have the well-known theorem.

\bigskip
\noindent {\bf Theorem D.}  \cite{JU4}  {\it Let $(M,g_M,\mathcal F)$ be as in Theorem A. If $M$ admits a transversal nonisometric conformal field $Y$ such that
\begin{align*}
 \int_M g_Q (E^Q (\nabla f_Y), \nabla f_Y) \geq 0,
\end{align*} 
 then $\mathcal F$ is transversally isometric to the sphere $(S^q(1/c),G)$.}

\begin{remark}   Since ${\rm Ric^Q}(\nabla f_Y) = {\sigma^Q\over q}\nabla f_Y$ implies $E^Q(\nabla f_Y)=0$, Theorem D is a generalization of Theorem A (3) when $\mathcal F$ is minimal.
\end{remark}

\bigskip
\noindent
 {\bf Theorem E.}  (\cite{JJ}, \cite{JL}) {\it Let  $(M,g_M,\mathcal F)$ be as in Theorem A, and suppose that $\mathcal F$ is minimal.  If $M$ admits a transversal nonisometric conformal field $Y$   such that
\begin{align*}
(i)\ L_Y|E^Q|^2& = 0\  \textrm{(\cite{JJ})} 
\end{align*}
or
\begin{align*}
(ii) \ L_Y|Z^Q |^2& = 0  \  \textrm{(\cite{JL})},
\end{align*}
then $\mathcal F$ is transversally isometric to the sphere $(S^q(1/c),G)$.}
\begin{remark} Theorem D and Theorem E have been proved in \cite{YA} for  the point foliation, that is, an ordinary manifold.
\end{remark}

\bigskip
\noindent In this paper, we prove the following theorems.

\bigskip
\noindent{\bf Theorem 1.} {\it Let $(M,g_M,\mathcal F)$ be as in Theorem A, and suppose that $\mathcal F$ is minimal.  If $M$ admits a transversal nonisometric conformal field $Y$ such that
\begin{align*}
L_Y |E^Q|^2 = const. \quad{\rm or} \ L_Y |Z^Q|^2=const.,
\end{align*}
then $\mathcal F$ is transversally isometric to the sphere $(S^q(1/c),G)$.
}
\begin{remark} Theorem 1 is a generalization of Theorem E.
\end{remark}
\bigskip
\noindent {\bf Theorem 2.}  {\it Let $(M,g_M,\mathcal F)$ be as in Theorem A, and suppose that $\mathcal F$ is minimal.  If $M$ admits a transversal nonisometric conformal field $Y$ such that
\begin{align*}
L_Y g_Q(L_YE^Q,E^Q) \leq 0,
\end{align*}
then $\mathcal F$ is transversally isometric to the sphere $(S^q(1/c),G)$.}

\begin{remark}
 Theorem 2 is a generalization of Theorem A (2) and (3) when $\mathcal F$ is minimal (cf. Remark. 4.3).
\end{remark}

\bigskip
\noindent{\bf Theorem 3.} {\it  Let $(M,g_M,\mathcal F)$ be as in Theorem A.  If $M$ admits a transversal conformal field $Y$ such that $Y= K + \nabla h$, where $K$ is a transversal Killing field and $h$ is a basic function, then 
$\mathcal F$ is transversally isometric to the sphere $(S^q(1/c),G)$}.

\begin{remark}
Theorem 3 is a generalization of Theorem A (1).
\end{remark}

\section{preliminaries}
  
 Let $(M,g_M,\mathcal F)$ be a $(p+q)$-dimensional
Riemannian manifold with a foliation $\mathcal F$ of codimension
$q$ and a bundle-like metric $g_M$ with respect to $\mathcal F$
[16]. Let $TM$ be the tangent bundle of $M$, $T\mathcal F$ its integrable
subbundle given by $\mathcal F$, and  $Q=TM/T\mathcal F$ the corresponding
normal bundle. Then there exists an exact sequence of vector
bundles
\begin{equation*}
 0 \longrightarrow T\mathcal F \longrightarrow
TM_{\buildrel \longleftarrow \over \sigma }^{\buildrel \pi \over \longrightarrow} Q \longrightarrow
0,
\end{equation*}
where $\pi:TM\to Q$ is a natural projection and $\sigma:Q\to T\mathcal F^\perp$ is a bundle map satisfying
$\pi\circ\sigma={\rm id}$.  Let $g_Q$ be the holonomy invariant metric
on $Q$ induced by $g_M$, that is,  $L_X g_Q=0$ for any $X\in T\mathcal F$, where
$L_X$ is the transversal Lie derivative, which is defined by 
$L_X s=\pi[X,\sigma(s)]$ for any $s\in \Gamma Q$. Let $\nabla$ be the transverse Levi-Civita
connection in $Q$  \cite{KT1}.
    The transversal curvature tensor $R^Q$ of $\nabla$ is defined by $
R^Q(X,Y)=[\nabla_X,\nabla_Y]-\nabla_{[X,Y]}$  for any vector fields
$X,Y\in\Gamma TM$. Let ${\rm Ric}^Q$ and $\sigma^Q$ be the transversal Ricci operator and the transversal scalar curvature of $\mathcal F$, respectively.
 The foliation $\mathcal F$ is said to be (transversally) {\it Einsteinian}  if ${\rm Ric}^Q=\frac1q \sigma^Q\cdot {\rm id}$ with constant transversal scalar curvature  $\sigma^Q.$
The mean curvature vector field $\tau$ is defined by
\begin{equation}
\tau=\sum_{i=1}^p\pi(\nabla^M_{f_i}f_i),
\end{equation}
 where $\{f_i\}(i=1,\cdots,p)$ is a local orthonormal frame field on $T\mathcal F$. 
  The foliation $\mathcal F$ is said to be {\it
minimal} if the mean curvature vector field  $\tau$ vanishes. 
Let $\{e_a\} (a=1,\cdots,q)$ be a local orthonormal frame field on $Q$. For any $s\in \Gamma Q$, the transversal divergence ${\rm div}_\nabla(s)$  is given by
\begin{align}
{\rm div}_\nabla(s) =\sum_{a=1}^q g_Q(\nabla_{e_a}s,e_a).
\end{align}
For the later use, we recall the transversal divergence theorem
\cite{YT} on a foliated Riemannian
manifold.
\begin{theorem}  \cite{YT} 
Let $(M,g_M,\mathcal F)$ be a closed, connected Riemannian manifold with a foliation $\mathcal F$ and a bundle-like metric $g_M$ with respect to $\mathcal F$. Then
\begin{align*}
\int_M \operatorname{div_\nabla}(s) = \int_M g_Q(s,\tau)
\end{align*}
for all $s\in\Gamma Q$. 
\end{theorem}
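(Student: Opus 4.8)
The plan is to deduce the transversal divergence theorem from the classical divergence theorem on the closed manifold $M$ by comparing ${\rm div}_\nabla(s)$ with the full Riemannian divergence of the horizontal lift of $s$. First I would set $S=\sigma(s)\in\Gamma T\mathcal F^\perp$ and choose, at each point, a local orthonormal frame $\{f_1,\dots,f_p,E_1,\dots,E_q\}$ of $TM$ adapted to the splitting $TM=T\mathcal F\oplus T\mathcal F^\perp$, where $\{f_i\}$ spans $T\mathcal F$ and $E_a=\sigma(e_a)$ spans $T\mathcal F^\perp$. With respect to such a frame the ordinary divergence splits as
\[
{\rm div}_M(S)=\sum_{i=1}^p g_M(\nabla^M_{f_i}S,f_i)+\sum_{a=1}^q g_M(\nabla^M_{E_a}S,E_a),
\]
and I would analyze the tangential and normal sums separately.

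For the normal sum I would invoke the defining relation of the transverse Levi-Civita connection, $\nabla_{E_a}s=\pi(\nabla^M_{E_a}S)$, together with the fact that $g_Q$ is induced from $g_M|_{T\mathcal F^\perp}$ and that $E_a\in T\mathcal F^\perp$, to identify $g_M(\nabla^M_{E_a}S,E_a)=g_Q(\nabla_{E_a}s,e_a)$; summing over $a$ then reproduces exactly ${\rm div}_\nabla(s)$. For the tangential sum, since $S\perp f_i$ I would differentiate $g_M(S,f_i)=0$ to obtain $g_M(\nabla^M_{f_i}S,f_i)=-g_M(S,\nabla^M_{f_i}f_i)$; summing over $i$ and using $S\in T\mathcal F^\perp$ with the definition $\tau=\sum_i\pi(\nabla^M_{f_i}f_i)$ gives $\sum_i g_M(\nabla^M_{f_i}S,f_i)=-g_M(S,\sigma(\tau))=-g_Q(s,\tau)$.

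Combining the two computations yields the pointwise identity
\[
{\rm div}_M(S)={\rm div}_\nabla(s)-g_Q(s,\tau),
\]
which, being a sum of traces, is independent of the local frame chosen and therefore defines a global function on $M$. Integrating over the closed, connected manifold $M$ and applying the classical divergence theorem, so that $\int_M{\rm div}_M(S)=0$, I would immediately obtain $\int_M{\rm div}_\nabla(s)=\int_M g_Q(s,\tau)$, which is the desired conclusion.

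I expect the main technical point to be the clean identification of the normal sum with ${\rm div}_\nabla(s)$: one must verify that replacing $\nabla^M$ by its $Q$-projection $\nabla$ costs nothing when pairing against $E_a\in T\mathcal F^\perp$, and that the trace $\sum_a g_Q(\nabla_{e_a}s,e_a)$ is genuinely frame-independent so that the local computation patches to a global identity. The remaining ingredients — the Leibniz manipulation for the tangential sum and the appeal to the ordinary divergence theorem on the compact $M$ — are routine.
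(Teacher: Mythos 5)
Your proof is correct: the pointwise identity $\operatorname{div}_M(\sigma(s))=\operatorname{div}_\nabla(s)-g_Q(s,\tau)$, obtained by splitting the ordinary divergence into normal and tangential traces, is exactly the right mechanism, and integrating it over the closed manifold with the classical divergence theorem gives the statement. Note that the paper itself offers no proof of this theorem (it is quoted from the reference [YT] of Yorozu--Tanemura), and your argument is essentially the standard one found there, so there is nothing to contrast.
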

A differential form $\omega\in \Omega^r(M)$ is {\it basic} if
$i(X)\omega=0$ and $i(X)d\omega=0$ for all $X\in T\mathcal F,$ where $i(X)$ is the interior product.
Let $\Omega_B^r(\mathcal F)$ be the set of all basic r-forms on
$M$. Then $\Omega^*(M)=\Omega_B^*(\mathcal F)\oplus \Omega_B^*(\mathcal F)^\perp$  \cite{LO}. Let $\kappa$ be the mean curvature form of $\mathcal F$, which is given by 
\begin{align*}
\kappa(s)=g_Q(\tau,s)
\end{align*}
 for any $s\in Q$. Then the basic part $\kappa_B$ of the mean curvature form is closed, i.e., $d\kappa_B=0$  \cite{LO}.  
Let $d_B$ be the restriction of $d$ on $\Omega_B(\mathcal F)$ and $\delta_B$  its formal adjoint operator of $d_B$ with respect to the global inner product $\ll\cdot,\cdot\gg$, which is given by
\begin{align*}
\ll \phi,\psi\gg = \int_M \phi\wedge\bar *\psi\wedge\chi_\mathcal F
\end{align*}
for any basic $r$-forms $\phi$ and $\psi$, 
where $\bar *$ is the star operator on $\Omega_B^*(\mathcal F)$ and $\chi_{\mathcal F}$ is the characteristic form of $\mathcal F$  \cite{TO}.  The operator $\delta_B$ is given by
\begin{align}
\delta_B \phi= \Big(\delta_T + i(\kappa_B^\sharp)\Big)\phi,\quad \delta_T\phi = (-1)^{q(r+1)+1}\bar * d_B \bar *\phi.
\end{align}
Note that  the induced connection $\nabla$ on $\Omega_B^* (\mathcal F)$  from the connection $\nabla$ on $Q$ and Riemannian connection $\nabla^M$ on $M$  extends the partial Bott connection, which satisfies $\nabla_X\omega = L_X\omega$ for any $X\in  T\mathcal F$ \cite{KT}. 
Then  the operator $\delta_T$ is given by
\begin{align}
\delta_T\phi = -\sum_{a=1}^q i(e_a)\nabla_{e_a}\phi.
\end{align}
The {\it basic Laplacian} $\Delta_B$ acting on
$\Omega_B^*(\mathcal F)$ is defined by
\begin{equation}
\Delta_B=d_B\delta_B+\delta_B d_B.
\end{equation} 
Then for any basic function $f$, we have
\begin{equation}
\Delta_B f =\delta_B d_B f =-\sum_a \nabla_{e_a}\nabla_{ e_a} f +\kappa_B^\sharp(f).
\end{equation}

\begin{remark}
Note that for any basic form $\omega$, the relation between $\delta_B$ and  the ordinary operator $\delta$ is given by
\begin{align}
\delta\omega = \delta_B\omega + *\gamma(\omega),
\end{align}
where $\gamma(\omega) = \pm \bar *\omega\wedge\varphi_0$ and $\varphi_0=d\chi_{\mathcal F} + \kappa\wedge\chi_{\mathcal F}$ with $\varphi_0\wedge\chi_{\mathcal F}=0$ \cite{TO}. If $\omega\in \Omega_B^r (r=0,1)$, then we easily have
\begin{align*}
\gamma(\omega)=0,
\end{align*} 
which implies that 
\begin{align}
\delta\omega = \delta_B\omega,\quad \Delta^M\omega =\Delta_B\omega,
\end{align}
where $\Delta^M = d \delta + \delta d$ is the ordinary Laplacian.
\end{remark}
For later use, we recall the generalized maximum principle for foliation (\cite{JLK}).
\begin{theorem} (\cite{JLK}) Let $(M,g_M,\mathcal F)$ be a closed, connected Riemannian manifold witha foliation $\mathcal F$ and a bundle-like metric $g_M$.  If $(\Delta_B -\kappa_B^\sharp)f \geq 0$ for 
any basic function $f$, then $f$ is constant. 
\end{theorem}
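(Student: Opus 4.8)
The plan is to identify $\Delta_B-\kappa_B^\sharp$, acting on basic functions, with a transversally elliptic second-order operator carrying no zeroth-order term, and then to apply E.~Hopf's strong maximum principle on a transverse slice. First I would use the formula $\Delta_Bf=-\sum_a\nabla_{e_a}\nabla_{e_a}f+\kappa_B^\sharp(f)$ to rewrite the hypothesis: for a basic function $f$,
\begin{align*}
(\Delta_B-\kappa_B^\sharp)f=-\sum_{a=1}^q\nabla_{e_a}\nabla_{e_a}f=-\operatorname{div}_\nabla(\nabla f)=-\operatorname{tr}_{g_Q}(\nabla^2 f),
\end{align*}
so the assumption $(\Delta_B-\kappa_B^\sharp)f\ge0$ says precisely that the trace of the transverse Hessian of $f$ is nonpositive. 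The cancellation of the mean-curvature drift $\kappa_B^\sharp(f)$ is essential: if the hypothesis were $\Delta_Bf\ge0$ one could conclude at once from $\int_M\Delta_Bf=0$, but here $\int_M(\Delta_B-\kappa_B^\sharp)f=-\int_M(\delta_B\kappa_B)f$ need not vanish, so a genuine pointwise maximum principle is unavoidable.

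Next I would localize using the Riemannian-foliation structure. Since $g_M$ is bundle-like and $\mathcal F$ is Riemannian, around each point of $M$ there is a simple foliated chart whose plaque space is an open set $\bar U\subset\mathbb R^q$; the basic function $f$ is the pull-back of a smooth $\bar f$ on $\bar U$, the holonomy-invariant metric $g_Q$ descends to a Riemannian metric $\bar g$ on $\bar U$, and the transverse Levi-Civita connection descends to the Levi-Civita connection of $\bar g$. Consequently $\operatorname{tr}_{g_Q}(\nabla^2 f)$ descends to the Laplace--Beltrami operator $\Delta_{\bar g}\bar f$ of $(\bar U,\bar g)$ --- with the mean-curvature term already absorbed in passing to $\Delta_B-\kappa_B^\sharp$ --- and the hypothesis becomes $-\Delta_{\bar g}\bar f\ge0$, i.e.\ $\bar f$ is superharmonic on the honest Riemannian manifold $(\bar U,\bar g)$, where $\Delta_{\bar g}$ is uniformly elliptic with vanishing zeroth-order coefficient.

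Finally I would invoke compactness and connectedness. As $M$ is closed, $f$ attains a global minimum at some $x_0$; in the chart at $x_0$ the descended function $\bar f$ attains an interior minimum, so Hopf's strong maximum principle (its minimum form, applicable because $\Delta_{\bar g}$ has no zeroth-order term) forces $\bar f$ to be constant on $\bar U$. Hence $f\equiv\min_M f$ on a full neighborhood of $x_0$: constant transversally by Hopf and along the leaves because $f$ is basic. Thus $\{f=\min_M f\}$ is open, and being closed and nonempty it equals $M$ by connectedness, so $f$ is constant. The step I expect to be the main obstacle is this localization: making rigorous that, for a bundle-like metric on a Riemannian foliation, a basic function together with the transverse operator descends to a genuinely uniformly elliptic Laplace--Beltrami problem with no zeroth-order term on the plaque space, so that the classical Hopf principle applies verbatim and the clopen argument closes.
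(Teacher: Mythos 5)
Your proof is correct, and the steps you flag as the main obstacle are all standard facts for Riemannian foliations with bundle-like metrics. Note first that the paper itself contains no proof of this statement: it is Theorem 2.3, quoted from \cite{JLK}, so there is no in-paper argument to compare against; the natural comparison is with the route available from the paper's own toolkit (essentially that of the cited source), which is shorter than yours and avoids localization entirely. By Remark 2.2 one has $\Delta^M\omega=\Delta_B\omega$ for basic forms of degree $0$ and $1$, so for a basic function $f$ the hypothesis reads $(\Delta^M-\kappa_B^\sharp)f\ge 0$ pointwise on the closed, connected manifold $M$ itself; this says $f$ is a supersolution of a uniformly elliptic second-order operator on $M$ with drift term $\kappa_B^\sharp$ and no zeroth-order term, and Hopf's strong maximum principle applied at a global minimum of $f$ on $M$ gives constancy at once --- no plaque spaces needed, and no need to cancel the drift, since Hopf tolerates first-order terms. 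What your transverse-descent argument buys is a proof that never leaves the transverse geometry and does not rely on the identity $\Delta^M=\Delta_B$ on basic functions: the cancellation $(\Delta_B-\kappa_B^\sharp)f=-\operatorname{tr}_{g_Q}\nabla^2 f$ is correct by (2.6) (to be read with second covariant derivatives, i.e.\ in a frame parallel at the point in question --- exactly what descends to the Laplace--Beltrami operator); the descent itself is legitimate because on a simple foliated chart the projection to the plaque space is a Riemannian submersion for $g_Q$, with the transverse Levi--Civita connection descending to the Levi--Civita connection of $\bar g$; and your clopen argument is valid, with the one point worth making explicit that openness of $\{f=\min_M f\}$ uses that every point of this set is again a \emph{global} minimizer, so the Hopf step can be rerun in a chart around each such point. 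Your side remark that $\int_M(\Delta_B-\kappa_B^\sharp)f=-\int_M f\,\delta_B\kappa_B$ need not vanish is also right, and correctly explains why a pointwise maximum principle is unavoidable here, in contrast to Theorem 2.4, where the hypothesis $\delta_B\kappa_B=0$ is imposed.
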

And we review some theorems for transversal nonisometric conformal field (\cite{JJ}).
\begin{theorem}  (\cite{JJ}) Let $(M,g_M,\mathcal F)$ be a closed, connected Riemannian manifold with a foliation $\mathcal F$ of codimension $q$ and bundle-like metric $g_M$ such that $\delta_B\kappa_B=0$. Assume that the transversal scalar curvature $\sigma^Q$ is nonzero constant. Then for any transversal nonisometric conformal field $Y$ such that $L_Y g_Q = 2f_Y g_Q\  (f_Y\ne 0)$, 
\begin{align*}
(\Delta_B -\kappa_B^\sharp)f_Y = {\sigma^Q\over q-1} f_Y \ {\rm and}\ \int_M f_Y =0.
\end{align*}
\end{theorem}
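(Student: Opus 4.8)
The plan is to compute the transversal Lie derivative of the transversal scalar curvature $L_Y\sigma^Q$ in two independent ways and compare them. On one hand, $\sigma^Q$ is a constant basic function and $Y$ is an infinitesimal automorphism, so $L_Y\sigma^Q = Y(\sigma^Q)=0$. On the other hand, regarding $\sigma^Q$ as a functional of the transverse metric $g_Q$ and using the naturality of the transversal scalar curvature under the (foliation-preserving) flow of $Y$, its variation is governed entirely by the conformal deformation $L_Yg_Q=2f_Yg_Q$. Equating the two expressions produces a pointwise identity that is precisely the first assertion; integrating that identity together with the hypothesis $\delta_B\kappa_B=0$ then yields the second.

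First I would derive the transversal analogue of the first-variation formula for the scalar curvature. Starting from $L_Yg_Q=2f_Yg_Q$, the induced variation of the transverse Levi-Civita connection is the tensor $(L_Y\nabla)(X,s)=(Xf_Y)\,s+g_Q(\nabla f_Y,s)\,\pi(X)-g_Q(\pi(X),s)\,\nabla f_Y$, obtained exactly as in the Riemannian case but with $\nabla$ the transverse connection. Differentiating the transversal curvature $R^Q$ and tracing twice with $g_Q$—noting that contracting $\mathrm{Ric}^Q$ against the varying inverse metric contributes the term $-2f_Y\sigma^Q$—then produces a formula of the shape $L_Y\sigma^Q=-2f_Y\sigma^Q-2(q-1)\sum_a\nabla_{e_a}\nabla_{e_a}f_Y$. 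Invoking $L_Y\sigma^Q=0$ and rewriting the transversal rough Laplacian $-\sum_a\nabla_{e_a}\nabla_{e_a}f_Y$ as $(\Delta_B-\kappa_B^\sharp)f_Y$ via (2.6) gives $(\Delta_B-\kappa_B^\sharp)f_Y=\frac{\sigma^Q}{q-1}f_Y$, which is the first claim.

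For the second claim I would integrate this identity over $M$. Since $\Delta_Bf_Y=\delta_Bd_Bf_Y$, the adjointness of $\delta_B$ and $d_B$ with respect to $\ll\cdot,\cdot\gg$ gives $\int_M\Delta_Bf_Y=\ll d_Bf_Y,d_B1\gg=0$, while $\int_M\kappa_B^\sharp(f_Y)=\int_Mg_Q(\kappa_B^\sharp,\nabla f_Y)=\ll\kappa_B,d_Bf_Y\gg=\ll\delta_B\kappa_B,f_Y\gg=0$ by the standing hypothesis $\delta_B\kappa_B=0$. Hence $\int_M(\Delta_B-\kappa_B^\sharp)f_Y=0$, and since $\sigma^Q$ is a nonzero constant (which forces $q\geq2$, so that $q-1\neq0$) we conclude $\frac{\sigma^Q}{q-1}\int_Mf_Y=0$, that is, $\int_Mf_Y=0$.

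The main obstacle is the first step. The transversal scalar-curvature variation formula is not the verbatim Riemannian one, because $\nabla$ is a genuine connection only in normal directions and the foliated contracted Bianchi identity for $\mathrm{div}_\nabla\mathrm{Ric}^Q$ carries mean-curvature correction terms. Carefully tracking these terms—and eliminating them using $d\kappa_B=0$ (which always holds) together with $\delta_B\kappa_B=0$—is where the real work lies; once the clean variation formula is established, the remaining deductions are the short computations indicated above.
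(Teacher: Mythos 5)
Your proposal is correct and is essentially the paper's own route: the conformal variation formula you set out to derive is exactly identity (3.5) of Lemma 3.1 (quoted by the paper from \cite{JJ}), and setting $L_Y\sigma^Q=0$ for constant $\sigma^Q$ and then integrating, using adjointness of $d_B,\delta_B$ together with $\delta_B\kappa_B=0$, is precisely the standard argument behind Theorem 2.4. One remark: the difficulty you flag at the end is not actually present, since $f_Y$, $g_Q$ and the transversal curvatures are all basic, the variation formula is the verbatim local Riemannian computation on the local quotients, and the term $\kappa_B^\sharp(f_Y)$ enters only through the definition (2.6) of $\Delta_B$ (i.e.\ as the discrepancy between $\Delta_B$ and the rough Laplacian), not through any mean-curvature correction to the contracted Bianchi identity.
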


\section{Tensors $E^Q$ and $Z^Q$}
Let $(M,g_M,\mathcal  F)$ be a Riemannian  manifold of codimension $q$-dimensional foliation $\mathcal F$ and a bundle-like metric $g_M$ with respect to $\mathcal F$. 
 In this section, we give the properties of tensors $E^Q$ and $Z^Q$ on a Riemannian foliation. From (1.3) and (1.4),  we have
 \begin{align*}
 \sum_a Z^Q(s,e_a)e_a = E^Q(s)
 \end{align*}
 for any $s\in \Gamma Q$.  Also, we have the following (\cite{JJ}, \cite{JL}).
 \begin{align}
& {\rm tr}_Q E^Q=0,\quad {\rm div}_\nabla (E^Q) = {q-2\over 2q} \nabla \sigma^Q,\\
&|E^Q|^2 =|{\rm Ric}^Q|^2 -{(\sigma^Q)^2\over q},\quad |Z^Q|^2 =|R^Q|^2 -{2(\sigma^Q)^2\over q(q-1)}.
\end{align}
 Now, we recall the Lie derivatives of tensors along the transversal conformal field.
\begin{lemma}  (\cite{JU4}, \cite{JJ}, \cite{JL}) Let $ Y$
be a transversal conformal  field such that $L_Yg_Q=2f_Y g_Q$.  Then
\begin{align}
&g_Q((L_YR^Q)(e_a,e_b)e_c,e_d)=\delta_b^d\nabla_a f_c -\delta_b^c\nabla_a f_d -\delta_a^d\nabla_b f_c + \delta_a^c\nabla_b f_d,\\
&(L_Y{\rm Ric}^Q) (e_a,e_b) = -(q-2)\nabla_a f_b +(\Delta_B f_Y -\kappa_B^\sharp(f_Y))\delta_a^b,\\
&L_Y \sigma^Q = 2(q-1)(\Delta_B f_Y - \kappa_B^\sharp(f_Y))-2f_Y \sigma^Q,\\
&(L_Y E^Q)(e_a,e_b) = -(q-2)\{\nabla_a f_b + \frac 1q
(\Delta_B f - \kappa_B^\sharp(f))\delta_a^b\},\\
&L_Y|E^Q|^2 = -2(q-2)g_Q(\nabla \nabla f_Y,E^Q)-4f_Y |E^Q|^2,\label{4-8}\\
& L_Y|Z^Q|^2 = -8 g_Q(\nabla\nabla f_Y, E^Q) - 4 f_Y|
Z^Q|^2 .
\end{align}
where $\nabla_a =\nabla_{e_a}$ and $f_a =\nabla_a f_Y$.
\end{lemma}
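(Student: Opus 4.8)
The plan is to reduce all six identities to one master computation, namely the Lie derivative of the transverse Levi-Civita connection along $Y$, and then to generate the rest by successive tracing and by differentiating norms, keeping careful track of the fact that $L_Y$ also acts on the metric through $L_Yg_Q=2f_Yg_Q$ (so that each inverse metric entering a contraction contributes a factor $-2f_Y$). First I would introduce $S:=L_Y\nabla$, the symmetric $(1,2)$-tensor on $Q$ measuring the variation of $\nabla$ along the flow of $Y$. Since $Y$ is a foliated transversal field and $f_Y$ is basic, this flow preserves the transverse structure, so the transverse Levi-Civita assignment is natural under it and $S$ is exactly the linearization of the Levi-Civita connection at the metric variation $\dot g_Q=L_Yg_Q=2f_Yg_Q$. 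Using $\nabla g_Q=0$ this yields the Koszul-type expression $2g_Q(S(X,Z),V)=(\nabla_X\dot g_Q)(Z,V)+(\nabla_Z\dot g_Q)(X,V)-(\nabla_V\dot g_Q)(X,Z)$, and hence the conformal form $g_Q(S(e_a,e_b),e_c)=\delta_{bc}f_a+\delta_{ac}f_b-\delta_{ab}f_c$. The first identity then follows from the standard relation $(L_YR^Q)(e_a,e_b)e_c=(\nabla_{e_a}S)(e_b,e_c)-(\nabla_{e_b}S)(e_a,e_c)$, once one notes that the transverse Hessian is symmetric ($\nabla_a f_b=\nabla_b f_a$ for basic $f_Y$), which annihilates the term proportional to $\delta^d_c$.

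Next I would contract. Tracing the first identity over the appropriate index pair gives the formula for $L_Y{\rm Ric}^Q$; the only foliation-specific input is that $\sum_a\nabla_a\nabla_a f_Y=\kappa_B^\sharp(f_Y)-\Delta_B f_Y$, so that the rough trace of the Hessian is replaced by $-(\Delta_B f_Y-\kappa_B^\sharp(f_Y))$, precisely the combination appearing in the stated formulas. Tracing once more to reach $\sigma^Q={\rm tr}_{g_Q}{\rm Ric}^Q$, I must use $L_Y g_Q^{-1}=-2f_Y g_Q^{-1}$, which produces the extra summand $-2f_Y\sigma^Q$ in the third identity. The formula for $L_YE^Q$ is then immediate by differentiating $E^Q={\rm Ric}^Q-\tfrac{\sigma^Q}{q}g_Q$ and collecting the $L_Y(\tfrac{\sigma^Q}{q}g_Q)$ contribution; the $f_Y\sigma^Q$-terms cancel, leaving the stated multiple of $\nabla_a f_b+\tfrac1q(\Delta_B f_Y-\kappa_B^\sharp(f_Y))\delta_a^b$.

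Finally I would treat the two norms. Writing $|E^Q|^2$ and $|Z^Q|^2$ as full metric contractions and applying Leibniz, the metric factors account for the $-4f_Y|E^Q|^2$ and $-4f_Y|Z^Q|^2$ terms (two inverse metrics for $E^Q$; three inverse metrics and one metric in the raising of the curvature index for $Z^Q$). For the remaining piece $2\,g_Q(L_YE^Q,E^Q)$, trace-freeness ${\rm tr}_Q E^Q=0$ kills the pure-trace part of the $L_YE^Q$ formula, producing the coefficient $-2(q-2)$ in front of $g_Q(\nabla\nabla f_Y,E^Q)$. For the $Z^Q$ norm I would insert $Z^Q=R^Q-R_\sigma^Q$: pairing $L_YR^Q$ (via the first identity) against $Z^Q$ and using both the curvature symmetries and the contraction identity $\sum_a Z^Q(s,e_a)e_a=E^Q(s)$, each of the four terms of the first identity pairs to $-g_Q(\nabla\nabla f_Y,E^Q)$, giving $g_Q(L_YR^Q,Z^Q)=-4g_Q(\nabla\nabla f_Y,E^Q)$ and hence the coefficient $-8$ after the factor $2$; the $R_\sigma^Q$-contribution drops out because $L_Y R_\sigma^Q$ is again a multiple of the pure-metric curvature tensor, whose pairing with $Z^Q$ vanishes by ${\rm tr}_QE^Q=0$.

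The main obstacle is the foundational connection-variation step in the foliated category: one must verify that $S=L_Y\nabla$ genuinely defines a basic $(1,2)$-tensor on $Q$ and that the transverse Hessian is symmetric, both of which rest on $Y$ being a foliated field, on $f_Y$ being basic, and on the correct interaction of the Bott connection with $\kappa_B^\sharp$ in the identity $\Delta_B f_Y=-\sum_a\nabla_a\nabla_a f_Y+\kappa_B^\sharp(f_Y)$. Once this is secured the rest is bookkeeping; the only other place demanding vigilance is the careful tracking of the metric-dependent $-2f_Y$ factors, so that one does not spuriously obtain $-8f_Y|Z^Q|^2$, together with the verification of the fourfold symmetric collapse in the last identity.
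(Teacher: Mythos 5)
Your proposal is correct, but note that the paper itself contains no proof of this lemma: Lemma 3.1 is quoted verbatim from the references \cite{JU4}, \cite{JJ}, \cite{JL}, so there is no in-paper argument to compare against. What you have written is a valid self-contained derivation, and it is the classical route (Yano's method transposed to the transverse setting): compute $S=L_Y\nabla$ from the Koszul linearization at $\dot g_Q=2f_Yg_Q$, get the curvature variation from $(L_YR^Q)(X,Z)W=(\nabla_XS)(Z,W)-(\nabla_ZS)(X,W)$, then trace down to ${\rm Ric}^Q$, $\sigma^Q$, $E^Q$ and differentiate the norms. Your bookkeeping is right at the places where it is easy to go wrong: the transversal conformal field is by the paper's definition an infinitesimal automorphism, so $S$ is a well-defined basic $(1,2)$-tensor and the transverse Hessian of the basic function $f_Y$ is symmetric; the rough trace $\sum_a\nabla_a\nabla_af_Y=-(\Delta_Bf_Y-\kappa_B^\sharp(f_Y))$ is exactly the paper's (2.6); the formulas (3.4) and (3.6) must be read as statements about the $(0,2)$-tensors (your inverse-metric factor $-2f_Y$ in passing from (3.4) to (3.5) confirms this, and it reproduces the $-2f_Y\sigma^Q$ term); the count of metric factors gives $3(-2f_Y)+2f_Y=-4f_Y$ for $|Z^Q|^2$, not $-8f_Y$; and the pairing $g_Q(L_YR_\sigma^Q,Z^Q)$ vanishes because it reduces to traces of $Z^Q$, which equal ${\rm tr}_QE^Q=0$. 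The fourfold collapse giving $g_Q(L_YR^Q,Z^Q)=-4g_Q(\nabla\nabla f_Y,E^Q)$ also checks out using the antisymmetries of $Z^Q$ and $\sum_bZ^Q(e_a,e_b)e_b=E^Q(e_a)$, and it matches the identity the paper itself invokes (without proof) in Lemma 3.4. In short: your argument fills a gap the paper delegates to its references, and it does so by what is almost certainly the same method used there.
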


\begin{lemma}  If a transversal conformal field $Y$ satisfies $L_Y{\rm Ric}^Q = \mu g_Q$ for some basic function $\mu$, then
\begin{align*}
L_Y E^Q =0.
\end{align*}
\end{lemma}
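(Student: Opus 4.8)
The plan is to differentiate the defining relation $E^Q=\mathrm{Ric}^Q-\tfrac{\sigma^Q}{q}\,g_Q$ along $Y$ and then exploit that $E^Q$ is trace-free. Since $L_Y$ obeys the Leibniz rule on tensor products,
\[
L_YE^Q=L_Y\mathrm{Ric}^Q-\tfrac1q\,(L_Y\sigma^Q)\,g_Q-\tfrac{\sigma^Q}{q}\,L_Yg_Q .
\]
Feeding in the hypothesis $L_Y\mathrm{Ric}^Q=\mu g_Q$ together with $L_Yg_Q=2f_Yg_Q$, every term on the right is a basic-function multiple of $g_Q$; hence $L_YE^Q=c\,g_Q$, where $c=\mu-\tfrac1q L_Y\sigma^Q-\tfrac{2f_Y\sigma^Q}{q}$ is a basic function.

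It then remains to see that $c=0$, and the key point is that $L_YE^Q$ is automatically trace-free. Starting from the normalization $\mathrm{tr}_QE^Q=0$ and applying $L_Y$, the relation $L_Yg_Q=2f_Yg_Q$ gives $L_Y(g_Q^{-1})=-2f_Yg_Q^{-1}$, so
\[
0=L_Y\,\mathrm{tr}_QE^Q=-2f_Y\,\mathrm{tr}_QE^Q+\mathrm{tr}_Q(L_YE^Q)=\mathrm{tr}_Q(L_YE^Q).
\]
Taking the trace of $L_YE^Q=c\,g_Q$ now yields $cq=0$, whence $c=0$ and $L_YE^Q=0$.

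As a cross-check that dovetails with the preceding lemma, I would also run the argument in an orthonormal frame. Writing the hypothesis as $(L_Y\mathrm{Ric}^Q)(e_a,e_b)=\mu\,\delta_a^b$ and comparing with the frame formula $(L_Y\mathrm{Ric}^Q)(e_a,e_b)=-(q-2)\nabla_af_b+(\Delta_Bf_Y-\kappa_B^\sharp(f_Y))\delta_a^b$ forces, for $q>2$, the transverse Hessian to be pure trace, $\nabla_af_b=\lambda\,\delta_a^b$ for a basic function $\lambda$. Tracing this and using $\Delta_Bf_Y=-\sum_a\nabla_{e_a}\nabla_{e_a}f_Y+\kappa_B^\sharp(f_Y)$ gives $\lambda=-\tfrac1q(\Delta_Bf_Y-\kappa_B^\sharp(f_Y))$, and substituting into the frame formula $(L_YE^Q)(e_a,e_b)=-(q-2)\{\nabla_af_b+\tfrac1q(\Delta_Bf_Y-\kappa_B^\sharp(f_Y))\delta_a^b\}$ makes the brace collapse to zero.

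I do not expect a genuine obstacle here: the statement is a formal consequence of the Lie-derivative formulas together with the trace-free normalization of $E^Q$. The only points requiring a little care are the sign conventions relating $\sum_a\nabla_{e_a}\nabla_{e_a}f_Y$ to $\Delta_Bf_Y-\kappa_B^\sharp(f_Y)$, and the harmless degenerate case $q=2$, in which the factor $q-2$ makes $L_YE^Q$ vanish identically.
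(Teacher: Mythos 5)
Your proposal is correct, and your primary argument takes a genuinely different route from the paper's. The paper's proof of this lemma is exactly your ``cross-check'': it writes the hypothesis in a frame as $-(q-2)\nabla_a f_b + (\Delta_B f_Y - \kappa_B^\sharp(f_Y))\delta_a^b = \mu\delta_a^b$ (its (3.9)), traces this using the convention $\Delta_B f_Y - \kappa_B^\sharp(f_Y) = -\sum_a\nabla_a\nabla_a f_Y$ from (2.6) to get $\mu = \tfrac{2(q-1)}{q}(\Delta_B f_Y - \kappa_B^\sharp(f_Y))$, substitutes back to conclude $-(q-2)\{\nabla_a f_b + \tfrac1q(\Delta_B f_Y - \kappa_B^\sharp(f_Y))\delta_a^b\} = 0$, and identifies this expression with $(L_Y E^Q)(e_a,e_b)$ via (3.6). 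Your main argument is cleaner and more structural: applying the Leibniz rule to the $(0,2)$-tensor decomposition $E^Q = \mathrm{Ric}^Q - \tfrac{\sigma^Q}{q}g_Q$ shows at once that $L_Y E^Q = c\, g_Q$ for a basic function $c$, and the trace-freeness $\mathrm{tr}_Q E^Q = 0$ (the paper's (3.1)), which you correctly show is inherited by $L_Y E^Q$ under conformality via $L_Y(g_Q^{-1}) = -2f_Y g_Q^{-1}$, forces $c=0$. This buys independence from the curvature formulas: you never need (3.4), (3.5), or (3.6), and the case $q=2$ requires no separate remark. What the paper's frame computation buys is the explicit intermediate fact that the transverse Hessian of $f_Y$ is pure trace when $q>2$, the form of the conclusion that interfaces with the generalized Obata theorem used elsewhere in the paper. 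One point worth making explicit in your write-up: the decomposition argument requires reading $E^Q$ and $\mathrm{Ric}^Q$ as $(0,2)$-tensors rather than as the operators of (1.3), since the two readings of $L_Y E^Q$ differ by $2f_Y$-terms; this reading is the one consistent with the paper's formulas (3.4) and (3.6), so your argument and the paper's conventions agree.
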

\begin{proof}
 Let $Y$ be the transversal conformal field such that $L_Yg_Q=2f_Y g_Q$.   From (3.4), we have
\begin{align}
-(q-2)\nabla_a f_b +(\Delta_B f_Y -\kappa_B^\sharp(f_Y))\delta_a^b = \mu\delta_a^b.
\end{align}
From (2.4) and (3.9), we have
\begin{align}
\mu = {2(q-1)\over q} (\Delta_B f_Y -\kappa_B^\sharp(f_Y)).
\end{align}
From (3.9) and (3.10), we have
\begin{align*}
-(q-2)\Big\{\nabla_a f_b +\frac1q(\Delta_B f_Y -\kappa_B^\sharp(f_Y))\delta_a^b\Big\}=0.
\end{align*}
 Therefore, the proof follows from  (3.6). 
 \end{proof}
 
 \begin{lemma} If $Y$ is a transversal conformal field, then
\begin{align}
L_Y|E^Q|^2 = 2g_Q(L_YE^Q,E^Q).
\end{align}
\end{lemma}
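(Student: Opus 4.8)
The plan is to use that $E^Q$ is a symmetric endomorphism of the normal bundle $Q$, so that its squared norm pairs one covariant with one contravariant slot; the nonzero conformal distortion $L_Yg_Q=2f_Yg_Q$ then enters these two slots with opposite signs and cancels. I would first record the invariant expression
\begin{align*}
|E^Q|^2=g^{ac}g_{bd}(E^Q)^b_a(E^Q)^d_c = {\rm tr}\big((E^Q)^2\big),
\end{align*}
where $(E^Q)^2=E^Q\circ E^Q$ is the composition of endomorphisms and ${\rm tr}$ is the metric-independent endomorphism trace; the second equality uses that $E^Q={\rm Ric}^Q-\frac{\sigma^Q}{q}\,{\rm id}$ is $g_Q$-self-adjoint.

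The second step is to apply $L_Y$, which is a derivation commuting with tensor contraction. From $L_Yg_{bd}=2f_Yg_{bd}$ one gets $L_Yg^{ac}=-2f_Yg^{ac}$ by differentiating $g^{ac}g_{cb}=\delta^a_b$, so the two metric factors contribute $+2f_Y|E^Q|^2$ and $-2f_Y|E^Q|^2$, which cancel. The derivation then acts on the two copies of $E^Q$ and, using the symmetry of the contraction, yields
\begin{align*}
L_Y|E^Q|^2 = 2\,g^{ac}g_{bd}(L_YE^Q)^b_a(E^Q)^d_c = 2\,g_Q(L_YE^Q,E^Q),
\end{align*}
which is the claim. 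Equivalently, in the trace formulation, $L_Y{\rm tr}\big((E^Q)^2\big)={\rm tr}\big((L_YE^Q)E^Q+E^Q(L_YE^Q)\big)=2\,{\rm tr}(E^Q\,L_YE^Q)$, and ${\rm tr}(E^Q\,L_YE^Q)=g_Q(L_YE^Q,E^Q)$ because the antisymmetric part of $L_YE^Q$ is annihilated against the symmetric $E^Q$.

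The only genuine subtlety, and the point I would watch most carefully, is this cancellation of the conformal term: for a transversal Killing field $L_Yg_Q=0$ and the product rule is automatic, but here $L_Yg_Q\neq 0$, so a careless product rule on $g_Q(E^Q,E^Q)$ would appear to leave a spurious $f_Y|E^Q|^2$ contribution. The resolution is exactly that $E^Q$ is a $(1,1)$-tensor, so its norm couples a metric with an inverse metric whose conformal factors cancel; this forces the identity to hold with $L_YE^Q$ read as the Lie derivative of the endomorphism and $g_Q(\cdot,\cdot)$ the endomorphism inner product. With this reading the lemma is consistent with the earlier formulas: combining (3.6) with the term $2f_YE^Q$ relating the endomorphism and bilinear-form Lie derivatives reproduces the expression (3.7) for $L_Y|E^Q|^2$.
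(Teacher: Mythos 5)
Your proposal is correct, and it establishes exactly the identity the paper intends, but it takes a genuinely different route from the paper's own proof. The paper argues by brute-force frame computation: choosing a local orthonormal frame $\{e_a\}$, it expands $L_Y\sum_a g_Q(E^Q(e_a),E^Q(e_a))$ by the Leibniz rule, which produces the conformal term $2f_Y|E^Q|^2$ together with the frame terms $2\sum_a g_Q(E^Q(L_Ye_a),E^Q(e_a))$ (an orthonormal frame is not Lie-parallel along $Y$); it then expands this last sum a second time, using the symmetry of $E^Q$ and the identity $|E^Q|^2=\sum_{a,b}g_Q(E^Q(e_b),e_a)^2$, obtaining a relation (the paper's (3.13)) that contains $\tfrac12 L_Y|E^Q|^2$, $-2f_Y|E^Q|^2$, $-g_Q(L_YE^Q,E^Q)$ and the frame sum itself; substituting back into (3.12) and solving, the spurious terms cancel. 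You bypass all of this bookkeeping with a single structural observation: $|E^Q|^2={\rm tr}\bigl((E^Q)^2\bigr)$ is a full contraction of $(1,1)$-tensors in which no metric occurs (equivalently, the conformal weights of $g_{bd}$ and $g^{ac}$ cancel), so $L_Y$ commutes with the contraction and acts only on the two factors of $E^Q$, and self-adjointness of $E^Q$ converts ${\rm tr}(E^Q\,L_YE^Q)$ into $g_Q(L_YE^Q,E^Q)$. What your approach buys is brevity and an explanation of the mechanism: it shows why no correction term appears here, in contrast to the $-4f_Y|Z^Q|^2$ in (3.14), namely because the norm of a $(1,1)$-tensor has zero net conformal weight while that of the $(1,3)$-tensor $Z^Q$ does not. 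It also makes explicit a point the paper leaves implicit: the lemma holds with $L_YE^Q$ read as the Lie derivative of the endomorphism, which differs from the $(0,2)$-tensor derivative appearing in (3.6) by the term $2f_Y E^Q$ with an index lowered; your closing consistency check against (3.6) and (3.7) settles this correctly. What the paper's method buys is that it stays entirely within the elementary orthonormal-frame manipulations used throughout its Section 3, at the cost of an opaque cancellation.
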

\begin{proof}
Let $\{e_a\}$ be a local orthonormal basis on $Q$ such that $(\nabla e_a)_x =0$ at a point $x$.  Let $Y$ be  the transversal conformal field $Y$ such that $L_Yg_Q=2f_Y g_Q$. Then  at $x$, we have
\begin{align}
L_Y|E^Q|^2 &= \sum_a L_Y g_Q(E^Q(e_a),E^Q(E_a))\notag\\
&=\sum_a (L_Yg_Q) (E^Q(e_a),E^Q(e_a)) + 2\sum_a g_Q((L_YE^Q)(e_a),E^Q(e_a))\notag\\
& +2 \sum_a g_Q(E^Q(L_Ye_a),E^Q(e_a))\notag\\
&= 2f_Y |E^Q|^2 +2g_Q(L_YE^Q,E^Q) +2\sum_a g_Q(E^Q(L_Ye_a),E^Q(e_a)).
\end{align}
Now, we calculate the last term in the above equation. That is,
\begin{align*}
&\sum_a g_Q(E^Q(L_Ye_a),E^Q(e_a))\\
&=\sum_{a,b} g_Q(E^Q(L_Ye_a),e_b)g_Q(E^Q(e_a),e_b)\\
&=\sum_{a,b} g_Q(E^Q(e_b),L_Ye_a) g_Q(E^Q(e_b),e_a)\\
&=\frac12 \sum_{a,b} L_Y \{g_Q(E^Q(e_b),e_a) g_Q(E^Q(e_b),e_a)\} -2f_Y |E^Q|^2\\
&-\sum_{a} g_Q((L_YE^Q)(e_a),E^Q(e_a)) -\sum_a g_Q(E^Q(L_Ye_a),E^Q(e_a)).
\end{align*}
Hence we have
\begin{align}
2\sum_a g_Q(E^Q(L_Ye_a),E^Q(e_a))= &\frac12 L_Y|E^Q|^2 -2f_Y |E^Q|^2\\
& -g_Q(L_YE^Q,E^Q).\notag
\end{align}
From (3.12) and (3.13),  the proof of (3.11)  follows
. 
\end{proof}

\begin{lemma} Let $Y$ be a transversal conformal field such that $L_Yg_Q =2f_Y g_Q$. Then
\begin{align}
&L_Y|Z^Q|^2 = 2g_Q(L_YZ^Q,Z^Q) -4f_Y |Z^Q|^2 \\
&(q-2)g_Q(L_YZ^Q,Z^Q) = 4 g_Q (L_YE^Q,E^Q) + 8 f_Y |E^Q|^2.
\end{align}
\end{lemma}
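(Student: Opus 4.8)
The statement consists of two identities, and the plan is to treat them in sequence: I derive the first (3.14) from a direct frame computation modeled on the proof of Lemma 3.3, and the second (3.15) by purely algebraic elimination among the formulas already recorded in Lemma 3.1 and Lemma 3.3.

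For (3.14) I would repeat the computation used for $E^Q$ in Lemma 3.3, now carried out for the $(1,3)$-tensor $Z^Q$. Fixing a local orthonormal frame $\{e_a\}$ with $(\nabla e_a)_x=0$ at the point $x$, I write $|Z^Q|^2=\sum_{a,b,c}g_Q(Z^Q(e_a,e_b)e_c,Z^Q(e_a,e_b)e_c)$ and apply $L_Y$ by Leibniz. The factor coming from $L_Yg_Q=2f_Yg_Q$ contributes $2f_Y|Z^Q|^2$, differentiation of the tensor itself contributes the main term $2g_Q(L_YZ^Q,Z^Q)$, and there remain three frame-correction terms, one for each slot $e_a,e_b,e_c$, of the form $\sum g_Q(Z^Q(\dots,L_Ye_\bullet,\dots),Z^Q(\dots))$. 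The crucial point, exactly as in Lemma 3.3, is that writing $L_Ye_a=\sum_b\Lambda_{ab}e_b$ the condition $L_Yg_Q=2f_Yg_Q$ forces $\Lambda_{ab}+\Lambda_{ba}=-2f_Y\delta_{ab}$; since each correction pairs $\Lambda$ against the partial self-contraction of $Z^Q$, which is a symmetric matrix in the remaining frame index, only the symmetric part $-f_Y\delta_{ab}$ of $\Lambda$ survives. Each of the three corrections therefore equals $-f_Y|Z^Q|^2$, and assembling the pieces gives $L_Y|Z^Q|^2=2g_Q(L_YZ^Q,Z^Q)+(2-2\cdot 3)f_Y|Z^Q|^2$, which is (3.14). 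Running the same count with a single input slot reproduces $L_Y|E^Q|^2=2g_Q(L_YE^Q,E^Q)$ of Lemma 3.3, which explains why the extra $f_Y$-term is absent there and equals $-4f_Y|Z^Q|^2$ here.

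For (3.15) I would eliminate the common quantity $g_Q(\nabla\nabla f_Y,E^Q)$. Comparing (3.14) with formula (3.8) of Lemma 3.1, namely $L_Y|Z^Q|^2=-8g_Q(\nabla\nabla f_Y,E^Q)-4f_Y|Z^Q|^2$, the terms $-4f_Y|Z^Q|^2$ cancel and I obtain $g_Q(L_YZ^Q,Z^Q)=-4g_Q(\nabla\nabla f_Y,E^Q)$. Likewise, comparing (3.11) of Lemma 3.3 with formula (3.7), $L_Y|E^Q|^2=-2(q-2)g_Q(\nabla\nabla f_Y,E^Q)-4f_Y|E^Q|^2$, yields $(q-2)g_Q(\nabla\nabla f_Y,E^Q)=-g_Q(L_YE^Q,E^Q)-2f_Y|E^Q|^2$. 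Multiplying the first relation by $(q-2)$ and substituting the second then gives $(q-2)g_Q(L_YZ^Q,Z^Q)=4g_Q(L_YE^Q,E^Q)+8f_Y|E^Q|^2$, which is exactly (3.15).

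The genuinely delicate step is the frame bookkeeping in (3.14): one must verify that the three non-tensorial $L_Ye_\bullet$ contributions collapse to a clean multiple of $|Z^Q|^2$. This hinges only on the fact that each partial contraction of $Z^Q$ against itself produces a symmetric matrix in the surviving frame index, so that the antisymmetric — and a priori undetermined — part of $\Lambda_{ab}$ drops out, leaving its pinned symmetric part $-f_Y\delta_{ab}$. Once this is secured, (3.15) is pure linear algebra over the Lie-derivative formulas of Lemma 3.1 and the identity of Lemma 3.3, with no further geometric input required.
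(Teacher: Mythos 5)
Your proof is correct, but it reaches the first identity (3.14) by a genuinely different route than the paper. The paper's own proof of (3.14) is a one-liner: it quotes the identity $g_Q(L_YZ^Q,Z^Q)=-4g_Q(\nabla\nabla f_Y,E^Q)$ from the earlier Jung--Lee paper \cite{JL} and substitutes it into formula (3.8) of Lemma 3.1. You instead prove (3.14) from scratch by the frame computation that the paper only carries out for $E^Q$ in Lemma 3.3: a Leibniz expansion in a frame with $(\nabla e_a)_x=0$, where the three $L_Ye_\bullet$ correction terms each collapse to $-f_Y|Z^Q|^2$ (times the pairing factor $2$) because each partial self-contraction of $Z^Q$ is a symmetric matrix in the surviving index, while $L_Yg_Q=2f_Yg_Q$ pins the symmetric part of $\Lambda_{ab}$ to $-f_Y\delta_{ab}$. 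Your bookkeeping $(2-2\cdot 3)f_Y|Z^Q|^2=-4f_Y|Z^Q|^2$ is right, and your one-slot sanity check correctly reproduces (3.11); note that the paper's Lemma 3.3 handles the analogous correction term by a self-referential trick (solving for the correction in terms of $L_Y|E^Q|^2$ itself) rather than by your symmetric/antisymmetric splitting, which is arguably cleaner. What your route buys is self-containedness: comparing your independently derived (3.14) with (3.8), you recover the quoted identity $g_Q(L_YZ^Q,Z^Q)=-4g_Q(\nabla\nabla f_Y,E^Q)$ as a corollary rather than needing it as an external input. For (3.15) the two arguments are essentially the same linear elimination of $g_Q(\nabla\nabla f_Y,E^Q)$ among (3.7), (3.8), (3.11) and (3.14); the paper first eliminates at the level of $L_Y|E^Q|^2$ and $L_Y|Z^Q|^2$ and then converts via (3.11) and (3.14), whereas you convert to the $g_Q(L_Y\cdot,\cdot)$ form first and then eliminate --- the same algebra in a different order.
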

\begin{proof}
 Note that   $g_Q(L_YZ^Q,Z^Q) = -4 g_Q(\nabla\nabla f_Y, E^Q)$  \cite{JL}. So (3.14) follows from (3.8). For the proof of (3.15),  from (3.7) and (3.8), 
\begin{equation*}
4 L_Y |E^Q|^2 = (q-2) L_Y |Z^Q|^2 + 4(q-2) f_Y |Z^Q|^2 -16 f_Y |E^Q|^2.
\end{equation*}
Hence from (3.11) and (3.14),  the equation (3.15) is proved. $\Box$

 \end{proof}
From (3.2) and Theorem E,  we have the following.
\begin{proposition} Let $(M,g_M,\mathcal F)$ be a closed, connected Riemannian manifold with a minimal foliation $\mathcal F$ of codimension $q\geq 2$ and a bundle-like metric $g_M$. Assume that  the transversal scalar curvature is nonzero constant and either $|Ric^Q|$  or $|R^Q|$  is constant. If $M$ admits a transversal nonisometric conformal field, then $\mathcal F$ is transversally isometric to the sphere $S^q(1/c),G)$.
\end{proposition}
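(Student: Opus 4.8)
The plan is to deduce the proposition directly from Theorem E, using the algebraic identities (3.2) to convert the constancy hypotheses on $|{\rm Ric}^Q|$ or $|R^Q|$ into the vanishing of $L_Y|E^Q|^2$ or $L_Y|Z^Q|^2$. The essential observation is that since $\sigma^Q$ is a nonzero \emph{constant}, the terms subtracted in (3.2) are themselves constant, so those identities transfer constancy from the full curvature norms to the trace-free pieces $E^Q$ and $Z^Q$.

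Concretely, I would argue by cases. If $|{\rm Ric}^Q|$ is constant, then the first identity in (3.2),
\begin{align*}
|E^Q|^2 = |{\rm Ric}^Q|^2 - \frac{(\sigma^Q)^2}{q},
\end{align*}
exhibits $|E^Q|^2$ as a constant function on $M$; hence $L_Y|E^Q|^2 = Y(|E^Q|^2) = 0$, and Theorem E (i) applies. If instead $|R^Q|$ is constant, the second identity in (3.2),
\begin{align*}
|Z^Q|^2 = |R^Q|^2 - \frac{2(\sigma^Q)^2}{q(q-1)},
\end{align*}
shows that $|Z^Q|^2$ is constant (this is where the hypothesis $q\geq 2$ is used, to make sense of the denominator), so $L_Y|Z^Q|^2 = 0$ and Theorem E (ii) applies. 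In both cases Theorem E yields that $\mathcal F$ is transversally isometric to the sphere $(S^q(1/c),G)$.

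I do not expect a serious obstacle, since all the analytic content sits inside the previously established Theorem E together with the pointwise identities (3.2). The only points requiring care are purely formal: that the transversal Lie derivative of a constant function vanishes, and that the constancy of $\sigma^Q$ is genuinely needed to guarantee the subtracted curvature terms are constant — without it, the passage from constancy of $|{\rm Ric}^Q|$ (resp. $|R^Q|$) to constancy of $|E^Q|^2$ (resp. $|Z^Q|^2$) would fail.
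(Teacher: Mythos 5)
Your proof is correct and is exactly the argument the paper intends: the paper introduces this proposition with ``From (3.2) and Theorem E, we have the following,'' i.e.\ constancy of $\sigma^Q$ together with constancy of $|{\rm Ric}^Q|$ (resp.\ $|R^Q|$) forces $|E^Q|^2$ (resp.\ $|Z^Q|^2$) to be constant via (3.2), so $L_Y|E^Q|^2=0$ (resp.\ $L_Y|Z^Q|^2=0$) and Theorem E applies. Your case analysis and the remark on where constancy of $\sigma^Q$ is needed match this line of reasoning precisely.
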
 
\begin{remark}  For the ordinary manifold, Proposition 3.5 has been proved in  \cite{LI} and \cite{HS}, respectively.
\end{remark}

\section{The proofs of  Theorems}

First, we recall the integral formulas for the tensor $E^Q$ and $Z^Q$ 
\begin{proposition}   (\cite{JU4}, \cite{JL})  Let $(M,g_M,\mathcal F)$ be a closed, connected Riemannian  manifold with a foliation $\mathcal F$ of codimension $q$ and a bundle-like metric $g_M$ with respect to $\mathcal F$.    Assume that the transversal scalar curvature  $\sigma^Q$ is nonzero constant.  Then for any  transversal nonisometric conformal field $Y$ such that $L_Yg_Q=2f_Y g_Q\ (f_Y\ne 0)$,  we have 
\begin{align} \label{2-7}
2(q-2)\int_M g_Q(E^Q(\nabla f_Y),\nabla f_Y)&=\int_M\{ 4f_Y^2 |E^Q|^2 + f_Y L_Y|E^Q|^2\}\\
&\ +2(q-2)\int_M g_Q(E^Q(f_Y \nabla f_Y),\kappa_B^\sharp)\notag
\end{align}
and
\begin{align}
\int_M g_Q(E^Q(\nabla f_Y),\nabla f_Y)&={1\over 2}\int_M\{ f_Y^2 |Z^Q|^2 + \frac14f_Y L_Y|Z^Q|^2\}\\
&\ \int_M g_Q({\rm Ric}^Q(f_Y \nabla f_Y),\kappa_B^\sharp)\notag
\end{align}

\end{proposition}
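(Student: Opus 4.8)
The plan is to derive both identities from the pointwise Lie-derivative formulas (3.7)--(3.8) of Lemma 3.1 by multiplying through by $f_Y$, integrating over $M$, and converting the Hessian--curvature contraction $g_Q(\nabla\nabla f_Y, E^Q)$ into the quadratic form $g_Q(E^Q(\nabla f_Y),\nabla f_Y)$ by a single integration by parts. The whole argument hinges on the hypothesis that $\sigma^Q$ is constant: by (3.1) this forces ${\rm div}_\nabla E^Q = \frac{q-2}{2q}\nabla\sigma^Q = 0$ and, equivalently, ${\rm div}_\nabla {\rm Ric}^Q = 0$, so the transversal divergence of the relevant section collapses to exactly the two terms we want.

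Concretely, the key lemma I would isolate first is the following integration by parts. Apply the transversal divergence theorem (Theorem 2.1) to the basic section $s = E^Q(f_Y\nabla f_Y) = f_Y\,E^Q(\nabla f_Y)$. Expanding ${\rm div}_\nabla s$ by the Leibniz rule in a frame with $(\nabla e_a)_x = 0$, and using both ${\rm div}_\nabla E^Q = 0$ and the symmetry of $E^Q$, gives ${\rm div}_\nabla s = g_Q(E^Q(\nabla f_Y),\nabla f_Y) + f_Y\,g_Q(E^Q,\nabla\nabla f_Y)$. Since $s$ is basic, the right-hand side $\int_M g_Q(s,\tau)$ of Theorem 2.1 reduces to $\int_M g_Q(s,\kappa_B^\sharp)$. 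Hence
\[
\int_M f_Y\,g_Q(E^Q,\nabla\nabla f_Y) = -\int_M g_Q(E^Q(\nabla f_Y),\nabla f_Y) + \int_M g_Q(E^Q(f_Y\nabla f_Y),\kappa_B^\sharp).
\]

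With this in hand the first identity is immediate: multiply (3.7) by $f_Y$, integrate, substitute the displayed relation for $\int_M f_Y\,g_Q(E^Q,\nabla\nabla f_Y)$, and collect terms; the factor $-2(q-2)$ in (3.7) reproduces the prefactor $2(q-2)$ on the left together with the mean-curvature term on the right. For the second identity I would first contract (3.8) with $\tfrac14 f_Y$ and observe that $f_Y^2|Z^Q|^2 + \tfrac14 f_Y L_Y|Z^Q|^2 = -2 f_Y\,g_Q(\nabla\nabla f_Y,E^Q)$, so that after integration the entire left side equals $-\int_M f_Y\,g_Q(\nabla\nabla f_Y, E^Q)$. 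To produce the ${\rm Ric}^Q$ boundary term actually stated, I would redo the integration by parts with $s = {\rm Ric}^Q(f_Y\nabla f_Y)$ (again ${\rm div}_\nabla{\rm Ric}^Q = 0$) and use $E^Q = {\rm Ric}^Q - \tfrac{\sigma^Q}{q}{\rm id}$; the extra trace contraction $g_Q({\rm id},\nabla\nabla f_Y) = \sum_a\nabla_a\nabla_a f_Y = \kappa_B^\sharp(f_Y) - \Delta_B f_Y$ is then removed by the Green-type identity $\int_M f_Y\Delta_B f_Y = \int_M|\nabla f_Y|^2$ coming from (2.9).

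The step I expect to be the main obstacle is precisely this mean-curvature bookkeeping, since the proposition is stated without assuming minimality. Two points need care: justifying that for a \emph{basic} section $s$ the divergence-theorem pairing $\int_M g_Q(s,\tau)$ may be replaced by $\int_M g_Q(s,\kappa_B^\sharp)$, and checking that the trace/Green step in the second identity leaves exactly the ${\rm Ric}^Q$ boundary term written in the statement. Indeed the two candidate boundary forms differ by $\tfrac{\sigma^Q}{q}\int_M f_Y\kappa_B^\sharp(f_Y) = \tfrac{\sigma^Q}{2q}\int_M(\delta_B\kappa_B)f_Y^2$, so I would track this residual term explicitly and verify how it is disposed of (it vanishes whenever $\kappa_B$ is harmonic). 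Everything else reduces to routine substitution into Lemma 3.1.
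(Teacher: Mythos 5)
Your derivation is correct, but note that the paper itself contains no proof of this proposition: it is recalled from \cite{JU4} and \cite{JL}, so the only meaningful comparison is with the derivation in those sources, which is precisely the one you reconstruct (multiply (3.7) and (3.8) by $f_Y$, integrate, and integrate by parts via the transversal divergence theorem, using that constancy of $\sigma^Q$ forces ${\rm div}_\nabla E^Q=\frac{q-2}{2q}\nabla\sigma^Q=0$). Your key lemma, $\int_M f_Y\,g_Q(E^Q,\nabla\nabla f_Y)=-\int_M g_Q(E^Q(\nabla f_Y),\nabla f_Y)+\int_M g_Q(E^Q(f_Y\nabla f_Y),\kappa_B^\sharp)$, is exactly the right integration by parts, and the replacement of $\tau$ by $\kappa_B^\sharp$ against the basic section $f_YE^Q(\nabla f_Y)$ is legitimate because the decomposition $\Omega^*(M)=\Omega_B^*(\mathcal F)\oplus\Omega_B^*(\mathcal F)^\perp$ of \cite{LO} is $L^2$-orthogonal, so the non-basic part of $\kappa$ pairs to zero with basic one-forms. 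With that lemma, the first identity (4.1) drops out exactly as you state.

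The ``obstacle'' you flag in the second identity is not a gap in your argument but a defect of the statement as printed. The natural derivation yields the boundary term $\int_M g_Q(E^Q(f_Y\nabla f_Y),\kappa_B^\sharp)$; converting it to the ${\rm Ric}^Q$ form written in (4.2) costs exactly the residual you computed, $\frac{\sigma^Q}{q}\int_M f_Y\kappa_B^\sharp(f_Y)=\frac{\sigma^Q}{2q}\int_M(\delta_B\kappa_B)f_Y^2$, which does not vanish in general. Hence (4.2) as stated requires the extra hypothesis $\delta_B\kappa_B=0$ (the hypothesis under which Theorem 2.4, quoted from \cite{JJ}, operates), or else ${\rm Ric}^Q$ should be replaced by $E^Q$; there is also a typographical omission of a ``$+$'' before the last integral of (4.2). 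None of this affects the paper's use of the proposition, since Theorems 1 and 2 assume $\mathcal F$ minimal, so $\kappa_B=0$ and every boundary term vanishes; but your version with $E^Q$ is the one that holds without restriction.
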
 

\noindent
{\it Proof of Theorem 1.}  Let $Y$ be the transversal nonisometric conformal field such that $L_Y g_Q = 2f_Y g_Q$. From Theorem 2.3, we have 
\begin{align}
\int_M f_Y =0. 
\end{align}
Assume that $\mathcal F$ is minimal. Since $L_Y|E^Q|^2=const$ or $L_Y|Z^Q|^2=const$, from (4.3) and Proposition 4.1, we have
\begin{align*}
2(q-2)\int_M g_Q(E^Q(\nabla f_Y),\nabla f_Y)=4\int_M f_Y^2 |E^Q|^2
\end{align*}
or
\begin{align*}
\int_M g_Q(E^Q(\nabla f_Y),\nabla f_Y)={1\over 2}\int_Mf_Y^2 |Z^Q|^2,
\end{align*}
respectively. Hence from Theorem D, the proof is completed.

\begin{lemma}  Let  $Y$ be a transversal conformal field such that $L_Yg_Q = 2f_Y g_Q$.  Then for any basic function $h$, 
\begin{align*}
\int_M hf_Y   = -{1\over q}\int_M L_Yh   + {1\over q}\int_M {\rm div}_\nabla(hY) .
\end{align*}
\end{lemma}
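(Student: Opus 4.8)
The plan is to establish the identity by unpacking the definition of the transversal conformal field and applying the transversal divergence theorem (Theorem 2.1). First I would recall that $Y$ transversal conformal means $L_Y g_Q = 2f_Y g_Q$, and from equation (1.2) this gives $f_Y = \frac{1}{q}\operatorname{div}_\nabla(\pi(Y))$. The natural object to differentiate is the product $h\,\pi(Y)$, so I would compute $\operatorname{div}_\nabla(h\pi(Y))$ using the Leibniz rule for the transversal divergence.

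The key computation is the product rule: for a basic function $h$ and the section $\pi(Y) \in \Gamma Q$,
\begin{align*}
\operatorname{div}_\nabla(h\,\pi(Y)) = h\,\operatorname{div}_\nabla(\pi(Y)) + g_Q(\nabla h, \pi(Y)).
\end{align*}
The first term on the right is $q\,h\,f_Y$ by (1.2). For the second term, I would identify $g_Q(\nabla h, \pi(Y))$ with the action of $Y$ on $h$; since $h$ is basic, $L_Y h = Y(h) = dh(\sigma(\pi(Y))) = g_Q(\nabla h, \pi(Y))$, where $\nabla h$ denotes the transversal gradient dual to $d_B h$. This lets me rewrite the Leibniz identity as
\begin{align*}
\operatorname{div}_\nabla(h\,\pi(Y)) = q\,h\,f_Y + L_Y h.
\end{align*}
Rearranging gives $q\,h\,f_Y = \operatorname{div}_\nabla(h\,\pi(Y)) - L_Y h$, and integrating over $M$ and dividing by $q$ yields exactly the claimed formula.

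Note that the integral of $\operatorname{div}_\nabla(h\,\pi(Y))$ is kept as is rather than converted via Theorem 2.1 into a mean-curvature term; the statement retains the divergence integral explicitly, so no minimality hypothesis is needed here and the transversal divergence theorem is not strictly required to close the argument — it is the pointwise Leibniz identity together with integration that does the work. The main obstacle I anticipate is the careful bookkeeping of the identification $L_Y h = g_Q(\nabla h, \pi(Y))$: one must verify that the transversal Lie derivative of a basic function along $Y$ really equals the pairing of the transversal gradient with the projected field, using that $h$ is basic so that only the normal component of $Y$ contributes. Once this identification is secured, the rest is a routine application of the Leibniz rule for $\operatorname{div}_\nabla$ and linearity of the integral.
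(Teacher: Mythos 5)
Your proof is correct, but it takes a genuinely different route from the paper. The paper's proof is Hodge-theoretic: it passes to the dual basic one-form $\omega = Y^\flat$, uses the adjointness relation $\int_M h\,\delta_B\omega = \int_M g_Q(\omega, d_B h) = \int_M L_Y h$, the decomposition $\delta_B = \delta_T + i(\kappa_B^\sharp)$ together with $\delta_T\omega = -\operatorname{div}_\nabla(Y) = -q f_Y$, and finally the transversal divergence theorem (Theorem 2.1) to convert the mean-curvature term $\int_M g_Q(hY,\kappa_B^\sharp)$ into $\int_M \operatorname{div}_\nabla(hY)$. You instead prove the pointwise identity $\operatorname{div}_\nabla(h\,\pi(Y)) = q\,h f_Y + L_Y h$ directly from the Leibniz rule for $\operatorname{div}_\nabla$, the trace formula (1.2), and the identification $L_Y h = g_Q(\nabla h, \pi(Y))$ (valid because $h$ is basic, so the leafwise component of $Y$ annihilates $h$), and then simply integrate. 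Your route is more elementary and in fact stronger: it produces a pointwise identity rather than only an integral one, needs neither the adjointness of $d_B$ and $\delta_B$ nor Theorem 2.1, and it sidesteps a step the paper glosses over, namely that $\int_M g_Q(hY,\kappa_B^\sharp)$ equals $\int_M g_Q(hY,\tau)$, which requires the non-basic part of the mean curvature to integrate to zero against the basic field $h\,\pi(Y)$. What the paper's formulation buys in exchange is that it exhibits the lemma as an instance of the basic Hodge machinery ($\delta_B$, $\delta_T$, $i(\kappa_B^\sharp)$) that is used throughout the rest of the article.
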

{\bf Proof.}  Let $\omega = Y^b$ be the dual basic 1-form of the transversal conformal form $Y$.  Then
\begin{align*}
\int_M h(\delta_B\omega)   = \int_M g_Q (\omega, d_B h)
=\int_M i(Y) d_B h  = \int_M L_Yh .
\end{align*}
Since $\delta_B = \delta_T + i(\kappa_B^\sharp)$ and $\delta_T\omega  =-{\rm div}_\nabla (Y)=-qf_Y$, we have
\begin{align*}
q\int_M hf_Y  &= -\int_Mh (\delta_T\omega) \\
&=-\int_M h(\delta_B\omega) + \int_M h i(\kappa_B^\sharp) \omega \\
&= -\int_M L_Y h  + \int_M g_Q (h Y,\kappa_B^\sharp) \\
&=-\int_M L_Y h + \int_M {\rm div}_\nabla (hY).
\end{align*}
Last equality in above follows from the transversal divergence theorem (Theorem 2.1).  Therefore, the proof is completed. $\Box$

\bigskip
\noindent{\it Proof of Theorem 2.}
  Let $Y$ be a transversal nonisometric conformal field, i.e., $L_Yg_Q =2f_Y g_Q$. From (2.6),  Lemma 3.4 and Proposition 4.1,  if we put $h=g_Q(L_YE^Q,E^Q)$, then from Lemma 4.2, we have
\begin{align*}
(q-2)\int_M& g_Q(E(\nabla f_Y),\nabla f_Y)\\
&=2\int_M f_Y^2 |E^Q|^2 + \int_M hf_Y + (q-2)\int_M g_Q(E(f_Y\nabla f_Y),\kappa_B^\sharp)\\
&=2\int_M f_Y^2 |E^Q|^2 -{1\over q}\int_M L_Y h+{1\over q}\int_M g_Q(hY,\kappa_B^\sharp)\\
&+(q-2) \int_M g_Q(E^Q(f_Y\nabla f_Y),\kappa_B^\sharp).
\end{align*}
Since $\mathcal  F$ is minimal, we have
\begin{align*}
(q-2)\int_M g_Q(E^Q(\nabla f_Y),\nabla f_Y)=2\int_M f_Y^2 |E^Q|^2 -{1\over q}\int_M L_Y g_Q(L_YE^Q,E^Q).
\end{align*}
Hence by the condition $L_Y g_Q(L_Y E^Q,E^Q)\leq 0$, we have
\begin{align*}
\int_M g_Q(E^Q(\nabla f_Y),\nabla f_Y) \geq 0.
\end{align*}
From Theorem D, the proof of Theorem 2 is completed.

\begin{remark}
Let $\mathcal F$ be minimal.  Then

(1)  From Lemma  3.3,  Theorem 2 yields Theorem A  (2). 

(2)  Theorem 2 is also a generalization of Theorem A (3).  In fact, assume that ${\rm Ric}^Q(\nabla f_Y)={\sigma^Q\over q}\nabla f_Y$, that is, $E^Q(\nabla f_Y)=0$. By differentiation, we have
\begin{align}
(\nabla_{e_a}E^Q) (\nabla f_Y) + E^Q(\nabla_a\nabla f_Y)=0.
\end{align}
From (4.4),  we have
\begin{align}
0&=\sum_a g_Q( (\nabla_{e_a} E^Q)(\nabla f_Y) +E^Q(\nabla_a \nabla f_Y),e_a) \notag\\
&= g_Q(\nabla f_Y,{\rm div}_\nabla(E^Q)) + \sum_a g_Q (E^Q(\nabla_a \nabla f_Y),e_a)\notag\\
&= \sum_a g_Q (\nabla_a\nabla f_Y, E^Q(e_a)).
\end{align}
From (3.1),  ${\rm div}_\nabla E^Q =0$ and so the last equality in the above  follows.  
Hence from (3.6) and (4.5), we have
\begin{align*}
g_Q(L_YE^Q,E^Q) &= \sum_a g_Q ((L_Y E^Q)(e_a), E^Q(e_a))\\
 &= -(q-2) \sum_a g_Q(\nabla_a \nabla f_Y, E^Q(e_a)) - {q-2\over q} (\Delta_B f_Y) \sum_a g_Q (e_a,E^Q(e_a))\\
 &=-(q-2) \sum_a g_Q(\nabla_a \nabla f_Y, E^Q(e_a)) - {q-2\over q} (\Delta_B f_Y) {\rm tr}_Q E^Q\\
 &=0.
 \end{align*}
 The last equality follows from ${\rm tr}_Q E^Q =0$.   Hence Theorem A (3) implies that $g_Q(L_Y E^Q,E^Q)=0$.  
\end{remark}
\bigskip
\noindent{\it Proof of Theorem 3.}  
 Let $Y$ be a transversal conformal field  such that $L_Yg_Q=2f_Y g_Q$ and  $Y= K + \nabla h$, where $K$ is a transversal Killing field and $h$ is a basic function. Then
\begin{align*}
g_Q(\nabla_XY,Z) + g_Q (\nabla_ZY,X) =2f_Y g_Q (X,Z)
\end{align*}
for any normal vector field $X,Z \in \Gamma Q$.  On the other hand, since  the transversal scalar curvature $\sigma^Q$ is constant, from Theorem 2.4, we have
\begin{align}
(\Delta_B -\kappa_B^\sharp)f_Y = {\sigma^Q\over q-1}f_Y.
\end{align}
Since $Y=K+\nabla h$, we have  $L_Y g_Q=L_{\nabla h} g_Q=2f_Y g_Q$. That is,
\begin{align}
g_Q(\nabla_X \nabla h,Z) + g_Q (\nabla_Z \nabla h,X) = 2f_Y g_Q(X,Z).
\end{align}
On the other hand,  $(\nabla \nabla h)(X,Z) =g_Q (\nabla_X \nabla h,Z)$ is symmetric. Therefore,  from (4.7)
\begin{align}
 (\nabla\nabla h)(X,Z) = f_Yg_Q(X,Z).
 \end{align}
 Hence from (2.4) and (4.8), we have
 \begin{align}
 (\Delta_B -\kappa_B^\sharp)h =-qf_Y.
 \end{align}
 From (4.6) and (4.9), we get
 \begin{align}
 (\Delta_B -\kappa_B^\sharp)\Big( f_Y + {\sigma^Q\over q(q-1)}h\Big)=0.
 \end{align}
 By the generalized maximum principle (Theorem 2.3), we have 
 \begin{align*}
 f_Y + {\sigma^Q\over q(q-1)}h =const,
 \end{align*}
 which implies 
 \begin{align}
 \nabla\nabla f_Y + {\sigma^Q\over q(q-1)}\nabla\nabla h =0.
 \end{align}
 From (4.8) and (4.11), we have
 \begin{align*}
 \nabla\nabla f_Y  = -{\sigma^Q\over q(q-1)} f_Y.
 \end{align*}
 By the generalized Obata theorem  \cite{JLK},  $\mathcal F$ is transversally isometric to the sphere $(S^q(1/c),G)$, where $c^2={\sigma^Q\over q(q-1)}$. 
 
 \begin{remark}
   {\rm Theorem 3 is a generalization of  Theorem A (1).}
 \end{remark}

\end{document}